\newtheorem{thm}{Theorem}[section]
\newtheorem{lem}[thm]{Lemma}
\newtheorem{prop}[thm]{Proposition}
\newtheorem{cor}[thm]{Corollary}
\theoremstyle{definition}
\newtheorem{defn}[thm]{Definition}
\newtheorem{exam}[thm]{Example}
\theoremstyle{remark}
\newtheorem{rem}[thm]{Remark}
\numberwithin{equation}{section}
\title[H. V. Dedania and J. G. Patel*]{Uniqueness of Norm and Faithfulness of some product Banach Algebras}
\author{H. V. Dedania}
\address{Dept. of Mathematics, Sardar Patel University, Vallabh Vidyanagar 388120, Gujarat, India}
\email{hvdedania@gmail.com}
\author{J. G. Patel*}
\address{Dept. of Mathematics, Sardar Patel University, Vallabh Vidyanagar 388120, Gujarat, India}
\email{jatinprofessor39@gmail.com}
\begin{document}
	
	\subjclass[2020]{46H05, 46H25.}
	
	\keywords{Algebra, Banach Algebra, Faithful, Norm }
	
\begin{abstract}
		We prove that the faithfull and uniqueness of norm properties are stable in different product algebras such as direct-sum product algebra, convolution product algebra, and module product algebra. Further, we exhibit that these properties are not stable in null product algebra, and also give a common sufficient condition in terms of algebra norm for the co-dimension of $\mathcal{A}^2 = \text{span} \{ ab : a,b \in \mathcal{A}\}$ to be finite in $\mathcal{A}$ and $\mathcal{A}^{2} = \mathcal{A} \ ( \text{when } \overline{\mathcal{A}^2} = \mathcal{A})$.
\end{abstract}
\maketitle
	
\section{Introduction}
	
	Throughout $\mathcal{A}$ is an (associative) algebra over the complex field $\mathbb{C}$. A norm $\| \cdot \|$ on $\mathcal{A}$, means it is a linear norm and submultiplicative, i.e. $\| a b \| \leq \|a \| \| b \| \ (a, b \in \mathcal{A})$. If $\| \cdot \|$ gives a complete metric topology, then we say $\mathcal{A}$ is a Banach algebra. The sets $N(\mathcal{A})$ and $N_{c}(\mathcal{A})$  denote the set of all algebra norms (up to equivalent), the set of all Banach algebra norms (up to equivalent), respectively. Let $\mathcal{I}$ be an ideal in an algebra $\mathcal{A}$. Then $\mathcal{A}$ is faithful over $\mathcal{I}$ if $ab=0 \ (b \in \mathcal{I})$, then $a=0$. An algebra $\mathcal{A}$ is faithful if it is faithful over itself~\cite{Da:00}. It is shown in~\cite{DP:22(a)} that the uniqueness of norm property on the cartesian product algebra $\mathcal{A} \times \mathcal{B}$ depends on the uniqueness of norm on $\mathcal{A}$ and $\mathcal{B}$, where $\mathcal{A}$ and $\mathcal{B}$ are Banach algebras. So, it is natural to study the uniqueness of norm on other product algebras namely, the direct-sum product algebra [Definition \ref{Def:Directsum}], the convolution product algebra [Definition \ref{Defn:Convolution}] and the module product algebra [Definition \ref{Def:Module}]. In some cases, we obtained necessary and sufficient conditions for uniqueness of norm and faithfulness.
	
	Dales and Loy have supplied an interesting example that it has two inequivalent complete norms~\cite{DL:97}. We proved in~\cite{DP:22(a)}, \cite{DP:22(b)} that it has infinitely many complete norms and incomplete norms. In this paper, we generalised this result for a normed algebra $\mathcal{A}$ with some condition on $\mathcal{A}$. We also note down that the condition ``$N(\mathcal{A}) = N_c(\mathcal{A})$ contains only one element" served as a sufficient condition for the co-dimension of $\mathcal{A}^2$ to be finite in $\mathcal{A}$, and $\mathcal{A}^2 = \mathcal{A}$ (in the case of $\mathcal{A}^2$ is dense in $\mathcal{A}$).

\section{Main Results}
	
\begin{prop}\cite[P. 14]{Wa:14}\label{1}
		Let $\mathcal A$ be a Banach algebra, and let $\mathcal{I}_1$, $\mathcal{I}_2$ be closed ideals in $\mathcal{A}$ with $\mathcal{I}_1 \subset \mathcal{I}_2$. Suppose that both $\mathcal{A}/ \mathcal{I}_2$ and $\mathcal{I}_2/\mathcal{I}_1$ have unique algebra norms. Then $\mathcal{A}/\mathcal{I}_1$ has a unique algebra norm.
\end{prop}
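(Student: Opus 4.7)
The plan is to use the closed graph theorem combined with a separating space argument in the spirit of Johnson and Sinclair. Let $\|\cdot\|$ denote the given quotient norm on $\mathcal{A}/\mathcal{I}_1$ and let $\|\cdot\|'$ be an arbitrary Banach algebra norm on the same algebra; I want to show $\|\cdot\|\sim\|\cdot\|'$. Equivalently, it suffices to prove that the formal identity
$$\iota:(\mathcal{A}/\mathcal{I}_1,\|\cdot\|)\longrightarrow(\mathcal{A}/\mathcal{I}_1,\|\cdot\|')$$
is continuous, since the open mapping theorem will then supply the reverse bound. By the closed graph theorem this amounts to showing that the separating ideal
$$\mathfrak{S}(\iota)=\bigl\{\,y\in\mathcal{A}/\mathcal{I}_1:\exists (x_n)\subset\mathcal{A}/\mathcal{I}_1,\ \|x_n\|\to 0,\ \|x_n-y\|'\to 0\,\bigr\}$$
is zero; note that $\mathfrak{S}(\iota)$ is automatically a closed two-sided ideal of $(\mathcal{A}/\mathcal{I}_1,\|\cdot\|')$.

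My first step is to localise $\mathfrak{S}(\iota)$ inside $\mathcal{I}_2/\mathcal{I}_1$. Let $\pi:\mathcal{A}/\mathcal{I}_1\to\mathcal{A}/\mathcal{I}_2$ be the natural quotient, continuous in the given norms. Transporting $\|\cdot\|'$ through $\pi$ by the quotient seminorm and passing to the Banach-algebra quotient $(\mathcal{A}/\mathcal{I}_1)/\overline{\mathcal{I}_2/\mathcal{I}_1}^{\,\|\cdot\|'}$, one obtains a Banach-algebra structure which, by construction, receives a surjective homomorphism from $\mathcal{A}/\mathcal{I}_2$. The hypothesis that $\mathcal{A}/\mathcal{I}_2$ has unique Banach-algebra norm, combined with Sinclair's stability lemma for separating spaces under composition with a continuous homomorphism, forces $\pi(\mathfrak{S}(\iota))=0$, i.e.\ $\mathfrak{S}(\iota)\subseteq \ker\pi=\mathcal{I}_2/\mathcal{I}_1$.

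My second step is to upgrade this inclusion to $\mathfrak{S}(\iota)=0$ using the hypothesis on $\mathcal{I}_2/\mathcal{I}_1$. Since $\mathcal{I}_2/\mathcal{I}_1$ is closed in $(\mathcal{A}/\mathcal{I}_1,\|\cdot\|)$, it is itself a Banach algebra in $\|\cdot\|$, and it has unique norm by assumption. Restricting $\iota$ to $\mathcal{I}_2/\mathcal{I}_1$ yields an identity-type homomorphism whose separating space contains $\mathfrak{S}(\iota)$; uniqueness of norm on $\mathcal{I}_2/\mathcal{I}_1$ forces this restricted identity to be continuous, so its separating space is trivial and hence $\mathfrak{S}(\iota)=0$. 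Applying the closed graph theorem concludes the proof.

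The principal obstacle is the first step: pushing $\mathfrak{S}(\iota)$ into $\mathcal{I}_2/\mathcal{I}_1$ without already knowing that $\iota$ (equivalently $\pi$ in the new norm) is continuous. The delicate point is verifying well-definedness of the induced map on the quotient by $\overline{\mathcal{I}_2/\mathcal{I}_1}^{\,\|\cdot\|'}$ and extracting continuity from uniqueness on $\mathcal{A}/\mathcal{I}_2$ alone; this is where Sinclair's stability lemma does the real work and is the crux of the argument.
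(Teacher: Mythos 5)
The paper offers no proof of this proposition (it is quoted from Ware's thesis), so I can only judge your argument on its own terms, and it has a genuine gap at exactly the point you flag as delicate, plus a second one you do not flag. Write $B=\mathcal{A}/\mathcal{I}_1$, $J=\mathcal{I}_2/\mathcal{I}_1$. In your first step, the object you build is a surjective homomorphism $\sigma$ from $B/J\cong\mathcal{A}/\mathcal{I}_2$ onto $B/\overline{J}^{\,\|\cdot\|'}$ with kernel $\overline{J}^{\,\|\cdot\|'}/J$; since this kernel may be nonzero, the transported quantity is only an algebra \emph{seminorm} on $\mathcal{A}/\mathcal{I}_2$, and uniqueness of norm on $\mathcal{A}/\mathcal{I}_2$ says nothing directly about it. One can repair this by passing to the algebra norm $\|x+J\|+\|x+\overline{J}^{\,\|\cdot\|'}\|'$ on $B/J$ and invoking uniqueness for that norm (which, note, requires the hypothesis to cover norms not known to be complete, since completeness of this sum norm is essentially what is being proved); even then the conclusion is only $\mathfrak{S}(\iota)\subseteq\overline{J}^{\,\|\cdot\|'}$, not $\mathfrak{S}(\iota)\subseteq J=\ker\pi$ as you assert. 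That defect is curable — uniqueness of norm on $J$ forces $\|\cdot\|'|_J\sim\|\cdot\||_J$, hence $(J,\|\cdot\|')$ is complete and $\|\cdot\|'$-closed — but you never make this move.

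The irreparable step as written is the second one: the inclusion $\mathfrak{S}(\iota)\subseteq\mathfrak{S}(\iota|_J)$ is false in general. An element of $\mathfrak{S}(\iota)$ is a $\|\cdot\|'$-limit of a sequence drawn from all of $B$ that tends to $0$ in $\|\cdot\|$, and there is no way to replace that sequence by one lying in $J$. Concretely, at this stage your argument uses only three facts: $\mathfrak{S}(\iota)\subseteq J$, the two norms are equivalent on $J$, and the two quotient norms are equivalent on $B/J$. These do not imply $\mathfrak{S}(\iota)=\{0\}$ even for Banach spaces: take $X=Y\oplus Z$ with $\|y+z\|=\|y\|+\|z\|$ and $\|y+z\|'=\|y-Tz\|+\|z\|$ for an everywhere-defined unbounded linear $T:Z\to Y$; both norms are complete, they coincide on $Y$, they induce the same quotient norm on $X/Y$, yet $\mathfrak{S}(\iota)=\overline{\{\lim Tz_n:\ z_n\to 0\}}\neq\{0\}$ while $\mathfrak{S}(\iota|_Y)=\{0\}$. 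So the multiplicative structure of $\|\cdot\|'$ on all of $B$ must enter here, and your sketch never uses it: the most the ideal property gives is $\mathfrak{S}(\iota)\cdot J=J\cdot\mathfrak{S}(\iota)=\{0\}$ (approximate $bc$ by $b_nc$ with $c\in J$), which combined with $\mathfrak{S}(\iota)\subseteq J$ only shows $\mathfrak{S}(\iota)$ is a square-zero ideal annihilating $J$ — not that it vanishes. Closing this gap requires a further construction of auxiliary algebra norms out of the hypothesised uniqueness (and is where the real work of Ware's proposition lies); separately, your reliance on the closed graph and open mapping theorems restricts $\|\cdot\|'$ to complete norms, whereas the paper applies the proposition to $N(\cdot)$, the class of all algebra norms.
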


\begin{defn}\cite{Ka:16}\label{Def:Directsum}
	Let $\mathcal{A}$ be an algebra and $\mathcal B$ be a subalgebra of $\mathcal{A}$. Then $\mathcal{A} \times {\mathcal B}$ is an algebra with pointwise linear operations and direct-sum product `$\times_d$' define as $(a,b)\times_d (c,d) = (ac+ad+bc, bd) \ ((a,b), (c,d) \in \mathcal{A} \times_d {\mathcal B})$. 
	The direct-sum product is a generalization of the product defined on $\mathcal{A}_e$ (means unitization of $\mathcal{A}$). Further, $\|(a, b)\|_1 = \| a \|+\| b \|$ and $|(a, b)| = \max\{\|a-b\|, \|b\|\} \ ((a,b) \in \mathcal{A} \times_d \mathcal{B})$ are algebra norms on $\mathcal{A} \times_d {\mathcal B}$.
\end{defn} 

\begin{rem}
(i) Let $(\mathcal{A}, \| \cdot \|)$ be a Banach algebra and $\mathcal{B}$ be a closed subalgebra of $\mathcal{A}$. Then $(\mathcal{A} \times_d {\mathcal B}, \| \cdot \|_1)$ is a Banach algebra.\\
(ii) $\mathcal{A} \times \{0\}$ is a closed ideal of $\mathcal{A} \times_d \mathcal{B}$.\\
(iii) By the first isomorphism theorem, $(\mathcal{A} \times_d \mathcal{B})/\mathcal{A} \cong \mathcal{B}$.\\
(iv) $\| \cdot \|_1$ and $|\cdot|$ are equivalent on $\mathcal{A} \times_d {\mathcal B}$.
\end{rem}

\begin{lem}
	Let $\mathcal{A}$ be an algebra and $\mathcal{B}$ be a subalgebra of $\mathcal{A}$. Then $\mathcal{A}$ and $\mathcal{B}$ are faithful if and only if $\mathcal{A} \times_d \mathcal{B}$ is faithful.
\end{lem}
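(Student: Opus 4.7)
The plan is to prove both implications by exploiting the explicit formula $(a,b)\times_d(c,d) = (ac+ad+bc,\,bd)$ coordinate by coordinate, using only the one-sided faithfulness condition stated in the paper.

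For the forward direction, I would assume $\mathcal{A}$ and $\mathcal{B}$ are faithful and take $(a,b)\in\mathcal{A}\times_d\mathcal{B}$ with $(a,b)\times_d(c,d)=(0,0)$ for every $(c,d)\in\mathcal{A}\times_d\mathcal{B}$. Reading the second coordinate gives $bd=0$ for all $d\in\mathcal{B}$, so faithfulness of $\mathcal{B}$ forces $b=0$. Substituting $b=0$ back into the first coordinate and specializing to $d=0$ yields $ac=0$ for all $c\in\mathcal{A}$, and faithfulness of $\mathcal{A}$ forces $a=0$. This half is essentially routine bookkeeping.

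For the converse, suppose $\mathcal{A}\times_d\mathcal{B}$ is faithful. To recover faithfulness of $\mathcal{A}$, take $a\in\mathcal{A}$ annihilating $\mathcal{A}$ on the left; since $\mathcal{B}\subseteq\mathcal{A}$ we also have $ad=0$ for every $d\in\mathcal{B}$, so $(a,0)\times_d(c,d)=(ac+ad,\,0)=(0,0)$, and faithfulness of the product algebra gives $(a,0)=0$.

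The only genuinely non-routine point is recovering faithfulness of $\mathcal{B}$: given $b\in\mathcal{B}$ with $bd=0$ for all $d\in\mathcal{B}$, the naive test element $(0,b)$ fails because $(0,b)\times_d(c,d)=(bc,bd)$ and $bc$ need not vanish for arbitrary $c\in\mathcal{A}$. The fix is to cancel the unwanted cross term by using $(-b,b)\in\mathcal{A}\times_d\mathcal{B}$, which is legitimate since $\mathcal{B}\subseteq\mathcal{A}$; then $(-b,b)\times_d(c,d)=(-bc-bd+bc,\,bd)=(-bd,\,bd)=(0,0)$, so faithfulness forces $b=0$. Finding this cancellation element is the main (and only) obstacle; once it is in hand, the proof is complete.
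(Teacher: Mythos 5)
Your proof is correct and follows essentially the same route as the paper's: the same coordinate-by-coordinate argument for the forward direction, the test element $(a,0)$ for recovering faithfulness of $\mathcal{A}$, and the same cancellation element $(-b,b)$ for recovering faithfulness of $\mathcal{B}$. No gaps.
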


\begin{proof}
	Assume that $\mathcal{A}$ and $\mathcal{B}$ are faithful. Let $(a,b) \in \mathcal{A} \times_d \mathcal{B}$ with $(a,b) \times_d (c,d)=(0,0) \ ((c,d) \in \mathcal{A} \times_d \mathcal{B})$. Then $(ac+ad+bc, bd)=(0,0) \; ((c,d) \in \mathcal{A} \times_d \mathcal{B})$. That implies $b=0$ as $\mathcal{B}$ is faithful. Put $b=0$ in $ac+ad+bc=0$, then $a(c+d)=0$ as $d$ was an arbitrary element, we can put $d=0$. That gives $a=0$ as $\mathcal{A}$ is faithful. Therefore $\mathcal{A} \times_d \mathcal{B}$ is faithful. Conversely,  assume that $\mathcal{A} \times_d \mathcal{B}$ is faithful. Suppose, if possible that there exists $0 \neq a \in \mathcal{A}$ such that $a x =0 \; (x \in \mathcal{A})$. Set $(a,0) \in \mathcal{A} \times_d \mathcal{B}$ and let $(x,y) \in \mathcal{A} \times_d \mathcal{B}$. Then $(a,0)\times_d (x,y) = (ax+ay+0 x, 0 y) = (0,0)$, which gives contradiction to faithfulness of $\mathcal{A} \times_d \mathcal{B}$. There fore $\mathcal{A}$ is faithful. Similarly, suppose $\mathcal{B}$ is not faithful, there exists $0 \neq b \in \mathcal{B}$ such that $b y =0 \; \; (y \in \mathcal{B})$. Set $(-b,b) \in \mathcal{A} \times_d \mathcal{B}$ and let $(x,y) \in \mathcal{A} \times_d \mathcal{B}$. Then $(-b, b) \times_d (x,y)=(-bx-by+bx, by)= (-by, by)=(0,0)$, which gives a contradiction to faithfulness of $\mathcal{A} \times_d \mathcal{B}$. Hence $\mathcal{B}$ is faithful.
\end{proof}

\begin{thm}\label{uniqueness norm on direct-sum product}
	Let $(\mathcal{A}, \| \cdot \|)$ be a Banach algebra and $\mathcal B$ be a closed subalgebra of $\mathcal{A}$. Then $N(\mathcal{A})$ and $N(\mathcal{B})$ are singleton if and only if $N(\mathcal{A} \times_d \mathcal{B})$ is singleton.
\end{thm}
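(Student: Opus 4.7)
The plan is to split the biconditional into two implications. The forward direction is essentially a direct application of Proposition~\ref{1}; the converse splits into a claim for $\mathcal{A}$ and a claim for $\mathcal{B}$, each handled by constructing an appropriate algebra norm on $\mathcal{A} \times_d \mathcal{B}$ and using the hypothesis that it must be equivalent to $\|\cdot\|_1$.

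For the forward direction, I would apply Proposition~\ref{1} to the Banach algebra $(\mathcal{A} \times_d \mathcal{B}, \|\cdot\|_1)$ with the nested pair of closed ideals $\mathcal{I}_1 = \{(0,0)\} \subset \mathcal{I}_2 = \mathcal{A} \times \{0\}$ (closed by Remark~(ii)). Their quotients are $\mathcal{I}_2/\mathcal{I}_1 \cong \mathcal{A}$ and $(\mathcal{A} \times_d \mathcal{B})/\mathcal{I}_2 \cong \mathcal{B}$ (Remark~(iii)), both of which have unique algebra norm by assumption. Proposition~\ref{1} then yields that $N(\mathcal{A} \times_d \mathcal{B})$ is singleton.

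For the converse, assume $N(\mathcal{A} \times_d \mathcal{B})$ is singleton. To show $N(\mathcal{A})$ is singleton, I would take any algebra norm $\|\cdot\|''$ on $\mathcal{A}$; since $\mathcal{B} \subseteq \mathcal{A}$, its restriction to $\mathcal{B}$ is still an algebra norm. Define $\|(a,b)\|_\diamond := \|a\|'' + \|b\|''$, which is submultiplicative by the routine expansion $\|ac+ad+bc\|''+\|bd\|'' \leq (\|a\|''+\|b\|'')(\|c\|''+\|d\|'')$. By hypothesis $\|\cdot\|_\diamond \sim \|\cdot\|_1$, and evaluating on elements of the form $(a,0)$ gives $\|\cdot\|'' \sim \|\cdot\|$ on $\mathcal{A}$.

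The delicate step is $N(\mathcal{B})$ singleton. Given an algebra norm $\|\cdot\|'$ on $\mathcal{B}$, the naive candidate $\|a\|+\|b\|+\|b\|'$ yields only the one-sided bound $\|\cdot\|' \leq M\|\cdot\|$ after comparison with $\|\cdot\|_1$. The key observation is that $\phi : \mathcal{A} \times_d \mathcal{B} \to \mathcal{A}$, $\phi(a,b) := a+b$, is an algebra homomorphism, as a direct check using the direct-sum product formula shows. Using this, I would set $\|(a,b)\|_\star := \|a+b\| + \|b\|'$; injectivity follows from $\|b\|' = 0 \Rightarrow b = 0$ and then $\|a+b\| = 0 \Rightarrow a = 0$, and submultiplicativity is immediate from $\phi$ being a homomorphism combined with $\|\cdot\|'$ being submultiplicative on $\mathcal{B}$. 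By hypothesis $\|\cdot\|_\star \sim \|\cdot\|_1$, so there exist $c, C > 0$ with $c\|(a,b)\|_\star \leq \|(a,b)\|_1 \leq C\|(a,b)\|_\star$. Probing this inequality at the kernel element $(-b, b) \in \ker\phi$ gives $c\|b\|' \leq 2\|b\| \leq C\|b\|'$, which yields the full equivalence $\|\cdot\|' \sim \|\cdot\|$ on $\mathcal{B}$. Identifying this construction—a norm built from the homomorphism $\phi$ and then probed along its kernel to extract the nonobvious bound—is the main obstacle of the argument.
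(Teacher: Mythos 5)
Your forward direction coincides with the paper's: the same nested closed ideals $\{(0,0)\}\subset\mathcal{A}\times\{0\}$, the same identifications of the two quotients with $\mathcal{A}$ and $\mathcal{B}$, and an appeal to Proposition~\ref{1}. The converse is where you genuinely diverge, and your route is the more careful one. The paper argues by contradiction: it fixes one norm $q$ on $\mathcal{B}$, takes two inequivalent norms $p_1,p_2$ on $\mathcal{A}$, and asserts that $q_i(x,y)=p_i(x)+q(y)$ are algebra norms on $\mathcal{A}\times_d\mathcal{B}$; but submultiplicativity of such a mixed sum requires comparing $q$ with $p_i$ on $\mathcal{B}$ (it is clear only when $q=p_i|_{\mathcal{B}}$, which repairs the argument), and the $N(\mathcal{B})$ half is dispatched with ``similarly.'' You instead argue directly: for $\mathcal{A}$ you compare $\|(a,b)\|_\diamond=\|a\|''+\|b\|''$ with $\|\cdot\|_1$ on the slice $\mathcal{A}\times\{0\}$, which is clean; for $\mathcal{B}$ you observe that $\phi(a,b)=a+b$ is multiplicative for $\times_d$, form $\|(a,b)\|_\star=\|a+b\|+\|b\|'$, and probe the resulting equivalence along $\ker\phi$ at $(-b,b)$ to get the two-sided bound $c\|b\|'\le 2\|b\|\le C\|b\|'$. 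This is exactly the point the paper's ``similarly'' glosses over --- an arbitrary algebra norm on the subalgebra $\mathcal{B}$ need not be comparable to the restriction of any norm on $\mathcal{A}$, so the naive sum norm yields only one inequality --- and your $\|\cdot\|_\star$ is structurally parallel to the second norm $|(a,b)|=\max\{\|a-b\|,\|b\|\}$ already recorded in Definition~\ref{Def:Directsum}. Both routes prove the theorem; yours costs one extra observation (that $\phi$ is a homomorphism) and buys a complete, verification-free-of-gaps argument for the $\mathcal{B}$ direction.
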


\begin{proof}
	Assume that $N(\mathcal{A})$ and $N(\mathcal B)$ are singleton. Consider $J_1=\{(0,0)\}$ and $J_2= \mathcal{A} \times \{0\}$. Then $J_1$ and $J_2$ are closed ideals in $\mathcal{A} \times_d \mathcal{B}, \, J_1 \subset J_2, \, (\mathcal{A} \times_d \mathcal{B})/J_2 \cong \mathcal{B}$, and $J_2/J_1 \cong \mathcal{A}$. Since $N(\mathcal{A})$ and $N(\mathcal{B})$ are singleton, $N((\mathcal{A} \times_d \mathcal{B})/J_2)$ and $N(J_2/ J_1)$ are singleton. Hence, by Proposition~\ref{1}, $\mathcal{A} \times_d \mathcal B$ has the unique algebra norm, i.e., $N(\mathcal{A} \times_d \mathcal B)$ is singleton.
	Conversely, assume that $N(\mathcal{A} \times \mathcal{B})$ is singleton. Suppose, if possible, $N(\mathcal{A})$ is not singleton. Choose two norms $p_1(\cdot), p_2(\cdot) \in N(\mathcal{A})$ such that $\widetilde{p_1}(\cdot)$ and $\widetilde{p_2}(\cdot)$ are two different classes in $N(\mathcal{A})$. Then $p_1(\cdot)$ and $p_2(\cdot)$ are not equivalent on $\mathcal{A}$. Next take $q(\cdot) \in N(\mathcal{B})$.  Define $q_1(x,y)=p_1(x)+q(y)$ and $q_2(x,y)=p_2(x)+q(y)$ for $(x,y) \in \mathcal{A} \times \mathcal{B}$. Then $q_1(\cdot)$ and $q_2(\cdot)$ are non-equivalent norms on $\mathcal{A} \times \mathcal{B}$, and hence $N(\mathcal{A} \times \mathcal{B})$ is not singleton, which is a contradiction. Thus $N(\mathcal{A})$ must be singleton. Similarly, $N(\mathcal{B})$ is singleton. 
\end{proof}

\begin{defn}\cite{Ka:16}\label{Defn:Convolution}
	Let $\mathcal{A}$ be an algebra and $\mathcal{I}$ be an ideal of $\mathcal{A}$. Then $\mathcal{A} \times \mathcal{I}$ is an algebra with co-ordinatewise linear operations and the convolution product `$\times_c$' define as $(a,x) \times_c (b,y) = (ab+xy, ay+xb) \  ((a,x), (b,y) \in \mathcal{A} \times_c \mathcal{I})$. Then $\mathcal{A} \times_c \mathcal{I}$ is called a \emph{convolution product} algebra.
\end{defn}

\begin{rem}
	(i) Let $\mathcal{I}$ be a closed ideal in $\mathcal{A}$. Then $\mathcal{I} \times_c \mathcal{I}$ is a closed ideal in $\mathcal{A} \times_c \mathcal{I}$.\\
	(ii) By the first isomorphism theorem, $(\mathcal{A} \times_c \mathcal{I})/(\mathcal{I} \times_c \mathcal{I}) \cong \mathcal{A}/\mathcal{I}$.
\end{rem}

\begin{lem}
	Let $\mathcal{A}$ be an algebra and $\mathcal{I}$ be an ideal in  $\mathcal{A}$. Then $\mathcal{A}$ is faithful if and only if $\mathcal{A} \times_c \mathcal{I}$ is faithful.
\end{lem}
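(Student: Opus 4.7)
The plan is to unpack the definition of the convolution product and exploit the fact that setting one coordinate of the test element to zero decouples the two defining equations, reducing the problem to faithfulness in $\mathcal{A}$ alone. Note that the ideal $\mathcal{I}$ does not need to be separately assumed faithful because, sitting inside $\mathcal{A}$, its elements are controlled by the faithfulness of $\mathcal{A}$ itself.

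For the forward direction, I would take $(a,x) \in \mathcal{A} \times_c \mathcal{I}$ with $(a,x) \times_c (b,y) = (0,0)$ for every $(b,y) \in \mathcal{A} \times_c \mathcal{I}$, which unfolds as the two equations $ab + xy = 0$ and $ay + xb = 0$ for all $b \in \mathcal{A}$ and $y \in \mathcal{I}$. The key move is to specialize $y = 0$: the first equation becomes $ab = 0$ for every $b \in \mathcal{A}$, and the second becomes $xb = 0$ for every $b \in \mathcal{A}$. Since $\mathcal{A}$ is faithful and $x \in \mathcal{I} \subseteq \mathcal{A}$, both $a = 0$ and $x = 0$ follow immediately, proving that $\mathcal{A} \times_c \mathcal{I}$ is faithful.

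For the converse, I would argue by contrapositive. Assuming $\mathcal{A}$ is not faithful, pick $0 \neq a \in \mathcal{A}$ with $ab = 0$ for all $b \in \mathcal{A}$, and consider the element $(a,0) \in \mathcal{A} \times_c \mathcal{I}$, which is nonzero. For any $(b,y) \in \mathcal{A} \times_c \mathcal{I}$ we compute $(a,0) \times_c (b,y) = (ab + 0 \cdot y,\, ay + 0 \cdot b) = (ab, ay)$, and both entries vanish because $y \in \mathcal{I} \subseteq \mathcal{A}$ and $a$ annihilates all of $\mathcal{A}$ on the right. This contradicts the faithfulness of $\mathcal{A} \times_c \mathcal{I}$, finishing the proof.

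The main obstacle, such as it is, is just recognising that the natural witness in the converse is $(a,0)$ rather than something involving the ideal $\mathcal{I}$, and that in the forward direction one should first kill the $y$ variable to decouple the two coordinate equations. No density, topology, or closure arguments are needed, so the proof is purely algebraic.
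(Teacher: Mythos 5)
Your proof is correct and takes essentially the same route as the paper: in the forward direction both specialize the test element to $(b,0)$ to decouple the equations into $ab=0$ and $xb=0$ for all $b\in\mathcal{A}$, and in the converse both use the witness $(a,0)$ for a right annihilator $a$ of $\mathcal{A}$. Your write-up is in fact slightly cleaner, since the paper's forward direction contains a notational slip (testing against $(b,x)$ rather than a general $(b,y)$), but the underlying argument is identical.
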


\begin{proof}
	Assume that $\mathcal{A}$ is faithful. Let $(a,x) \in \mathcal{A} \times_c \mathcal{I}$ with $(a,x) \times_c (b,x)=(0,0) \ ((b,x) \in \mathcal{A} \times_c \mathcal{I})$. Then $(a,x) \times_c (b,0)= (ab, xb) = (0,0) \ ((b,0) \in \mathcal{A} \times_c \mathcal{I})$, $ab=xb=0 \ (b \in \mathcal{A})$. Hence $a=x=0$ as $\mathcal{A}$ is faithful.  Conversely,  assume that $\mathcal{A} \times_c \mathcal{I}$ is faithful. Suppose, if possible, that there exists $0 \neq a \in \mathcal{A}$ such that $a x =0 \ (x \in \mathcal{A})$. Set $(a,0) \in \mathcal{A} \times_c \mathcal{I}$ and let $(x,y) \in \mathcal{A} \times_c \mathcal{I}$. Then $(a,0)\times_c (b,y) = (ab, ay) = (0,0)$, which gives a contradiction to faithfulness of $\mathcal{A} \times_c \mathcal{I}$. Hence $\mathcal{A}$ is faithful.
\end{proof}

\begin{lem}
	Let $\mathcal{A}$ be an algebra and $\mathcal{I}$ be a closed ideal of $\mathcal{A}$. Then $N(\mathcal{A} \times_c \mathcal{I})$ is singleton implies $N(\mathcal{A})$ is singleton.
\end{lem}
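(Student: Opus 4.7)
I would argue by contrapositive, mirroring the reverse direction of Theorem~\ref{uniqueness norm on direct-sum product}. Assume that $N(\mathcal{A})$ is not singleton, and pick two algebra norms $p_1,p_2$ on $\mathcal{A}$ whose equivalence classes in $N(\mathcal{A})$ are distinct, so that $p_1$ and $p_2$ are not equivalent on $\mathcal{A}$. The goal is to lift each $p_i$ to an algebra norm $q_i$ on $\mathcal{A}\times_c\mathcal{I}$ in such a way that inequivalence of $p_1,p_2$ on $\mathcal{A}$ forces inequivalence of $q_1,q_2$ on $\mathcal{A}\times_c\mathcal{I}$.

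The natural candidate, exactly paralleling the direct-sum proof, is
\[
q_i(a,x)=p_i(a)+p_i(x)\qquad((a,x)\in\mathcal{A}\times_c\mathcal{I}),
\]
which makes sense because $x\in\mathcal{I}\subseteq\mathcal{A}$, so $p_i(x)$ is defined. Clearly $q_i$ is a vector-space norm; the content is submultiplicativity with respect to the convolution product. For $(a,x),(b,y)\in\mathcal{A}\times_c\mathcal{I}$ one computes
\[
q_i\bigl((a,x)\times_c(b,y)\bigr)=p_i(ab+xy)+p_i(ay+xb)
\]
and estimates each term by submultiplicativity of $p_i$ on $\mathcal{A}$. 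Note that this is precisely the point where we need $\mathcal{I}$ to be an \emph{ideal} and not merely a subalgebra: we must know that $ay,xb\in\mathcal{I}$ (and $xy\in\mathcal{I}$) lie in $\mathcal{A}$ so that $p_i$ applies and the needed bounds $p_i(ay)\le p_i(a)p_i(y)$, $p_i(xb)\le p_i(x)p_i(b)$ hold. The four resulting terms then factor as $(p_i(a)+p_i(x))(p_i(b)+p_i(y))=q_i(a,x)q_i(b,y)$, giving submultiplicativity.

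Finally, inequivalence: if $q_1$ and $q_2$ were equivalent on $\mathcal{A}\times_c\mathcal{I}$, restricting to elements of the form $(a,0)$ would give $p_1(a)\asymp p_2(a)$ on $\mathcal{A}$, contradicting the choice of $p_1,p_2$. Hence $N(\mathcal{A}\times_c\mathcal{I})$ is not singleton, completing the contrapositive. I do not expect any serious obstacle; the only place requiring care is the submultiplicativity verification, where one must use the ideal property of $\mathcal{I}$ in each of the cross terms produced by $\times_c$.
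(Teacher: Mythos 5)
Your proof is correct and takes essentially the same route as the paper, which simply refers back to the converse part of Theorem~\ref{uniqueness norm on direct-sum product}: lift two inequivalent norms $p_1,p_2$ on $\mathcal{A}$ to two norms on the product algebra and restrict to the first coordinate to get a contradiction. Your specific choice $q_i(a,x)=p_i(a)+p_i(x)$, using the \emph{same} norm on both coordinates, is exactly what makes the submultiplicativity estimate factor as $(p_i(a)+p_i(x))(p_i(b)+p_i(y))$, and the verification you give is sound.
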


\begin{proof}
	The proof is similar to the ``converse part" of proof of  Theorem~\ref{uniqueness norm on direct-sum product}.
\end{proof}

\begin{thm}
Let $\mathcal{A}$ be a Banach algebra and $\mathcal{I}$ be a closed ideal of $\mathcal{A}$. Then $N(\mathcal{A}/ \mathcal{I})$ and $N(\mathcal{I} \times
_c \mathcal{I})$ are singleton implies $N(\mathcal{A} \times_c \mathcal{I})$ is singleton.
\end{thm}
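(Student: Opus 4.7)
The plan is to mimic the proof of Theorem~\ref{uniqueness norm on direct-sum product}, namely to apply Proposition~\ref{1} to the Banach algebra $\mathcal{A} \times_c \mathcal{I}$ with a suitable chain of closed ideals. The natural choice is
\[
J_1 = \{(0,0)\}, \qquad J_2 = \mathcal{I} \times_c \mathcal{I},
\]
so that $J_1 \subset J_2$ and both are closed ideals in $\mathcal{A} \times_c \mathcal{I}$ (the latter by the remark following Definition~\ref{Defn:Convolution}).

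Next I would identify the two quotient algebras. By the remark, the first isomorphism theorem gives
\[
(\mathcal{A} \times_c \mathcal{I})/J_2 \cong \mathcal{A}/\mathcal{I}, \qquad J_2/J_1 \cong \mathcal{I} \times_c \mathcal{I}.
\]
By hypothesis, $N(\mathcal{A}/\mathcal{I})$ and $N(\mathcal{I} \times_c \mathcal{I})$ are singletons, hence $N((\mathcal{A} \times_c \mathcal{I})/J_2)$ and $N(J_2/J_1)$ are singletons. An application of Proposition~\ref{1} then yields that $N((\mathcal{A} \times_c \mathcal{I})/J_1) = N(\mathcal{A} \times_c \mathcal{I})$ is a singleton, which is precisely the desired conclusion.

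The only technical point to verify before invoking Proposition~\ref{1} is that $\mathcal{A} \times_c \mathcal{I}$ actually carries a complete algebra norm, since Proposition~\ref{1} requires a Banach algebra. This is straightforward: equipping $\mathcal{A} \times_c \mathcal{I}$ with $\|(a,x)\| = \|a\| + \|x\|$, one checks submultiplicativity through
\[
\|(a,x) \times_c (b,y)\| = \|ab + xy\| + \|ay + xb\| \leq (\|a\| + \|x\|)(\|b\| + \|y\|),
\]
and completeness follows from the closedness of $\mathcal{I}$ in $\mathcal{A}$. I do not anticipate any genuine obstacle in this proof; the main content is the observation that the convolution product algebra fits into the two-step ideal chain required by Proposition~\ref{1}, with the quotients being exactly the two algebras whose norm uniqueness is assumed.
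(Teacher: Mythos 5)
Your proposal is correct and follows essentially the same route as the paper: both take $J_1 = \{(0,0)\}$ and $J_2 = \mathcal{I} \times_c \mathcal{I}$, identify $(\mathcal{A} \times_c \mathcal{I})/J_2 \cong \mathcal{A}/\mathcal{I}$ and $J_2/J_1 \cong \mathcal{I} \times_c \mathcal{I}$, and apply Proposition~\ref{1}. Your additional verification that $\|(a,x)\| = \|a\| + \|x\|$ is a complete submultiplicative norm on $\mathcal{A} \times_c \mathcal{I}$ is a point the paper leaves implicit, and it is a worthwhile check since Proposition~\ref{1} requires a Banach algebra.
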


\begin{proof}
	Set $J_1 = \{0\} \times \{0\}$ and $J_2 = \mathcal{I} \times \mathcal{I}$. Then $J_1$ and $J_2$ are two closed ideals of $\mathcal{A}, \, (\mathcal{A} \times_c \mathcal{I}/ J_2) \cong \mathcal{A}/ \mathcal{I}$ and $J_2/ J_1 \cong J_2$. Since $N(\mathcal{A}/ \mathcal{I})$ and $N(\mathcal{I} \times
	_c \mathcal{I})$ are singleton, $N(\mathcal{A} \times_c \mathcal{I}/ J_2)$ and $N(J_2/ J_1)$ are singleton. Hence, by Proposition~\ref{1}, $N(\mathcal{A} \times_c \mathcal{I})$ is singleton.
\end{proof}

\begin{defn}\cite{Da:00}\label{Def:Module}
	Let $(\mathcal{A}, \| \cdot \|)$ be a Banach algebra and $(\mathcal{X}, | \cdot |)$ be a Banach $\mathcal{A}$-bimodule. For $(a,x), (b,y) \in \mathcal{A} \times \mathcal{X}$, define $(a,x) \times_{m} (b,y) = (ab, ay+xb)$. Then $(\mathcal{A} \times_{m} \mathcal{X}, \times_{m})$ is called the module product algebra. It is a Banach algebra with the norm $\|(a,x) \|_1 = \|a\| + |x| \ ((a,x) \in \mathcal{A} \times_{m} \mathcal{X})$.
\end{defn}

\begin{rem}
	(i) $\{0\} \times \mathcal{X}$ is a closed ideal in $\mathcal{A} \times_{m} \mathcal{X}$.\\
	(ii) By the first isomorphism theorem, $(\mathcal{A} \times_{m} \mathcal{X})/ (\{0\} \times \mathcal{X}) \cong \mathcal{A}$.
\end{rem}

\begin{defn}
	Let $\mathcal{X}$ be an $\mathcal{A}$-bimodule. It is said to be $\mathcal{A}$-faithful if $xa = 0 \ (a \in \mathcal{A})$, then $x =0$.
\end{defn}

\begin{lem}\label{Lem:Faithful property on Module product algebra}
	If $\mathcal{A}$ is faithful and $\mathcal{X}$ is $\mathcal{A}$-faithful, then $\mathcal{A} \times_m \mathcal{X}$ is faithful.
\end{lem}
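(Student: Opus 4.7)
The plan is a direct unpacking of the definition, matching the style of the two preceding faithfulness lemmas in the excerpt. I would start by fixing $(a,x) \in \mathcal{A} \times_m \mathcal{X}$ under the hypothesis that $(a,x) \times_m (b,y) = (0,0)$ for every $(b,y) \in \mathcal{A} \times_m \mathcal{X}$, and then expand the product using the module product formula to obtain the two coordinate equations $ab = 0$ and $ay + xb = 0$ valid for all $b \in \mathcal{A}$ and all $y \in \mathcal{X}$.

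Next I would separate the variables to decouple the two conclusions. Specializing $y = 0$ in the second coordinate equation reduces it to $xb = 0$ for all $b \in \mathcal{A}$, while the first coordinate equation already gives $ab = 0$ for all $b \in \mathcal{A}$. Now the hypotheses apply directly: faithfulness of $\mathcal{A}$ forces $a = 0$ from the first family of equations, and $\mathcal{A}$-faithfulness of $\mathcal{X}$ (in the form $xb = 0$ for all $b \in \mathcal{A}$ implies $x = 0$) forces $x = 0$ from the second. Hence $(a,x) = (0,0)$, which is precisely the faithfulness of $\mathcal{A} \times_m \mathcal{X}$.

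There is essentially no obstacle here; the only subtlety worth flagging is that one must use a specialization argument ($y = 0$) to disentangle the $a$- and $x$-contributions in the second coordinate before invoking $\mathcal{A}$-faithfulness of $\mathcal{X}$, since otherwise the term $ay$ would still be present. Note also that the converse direction (which the lemma does not assert) would require additional information, because faithfulness of $\mathcal{A} \times_m \mathcal{X}$ lets one test against arbitrary pairs $(b,y)$, and recovering $\mathcal{A}$-faithfulness of $\mathcal{X}$ from that would need the bimodule action to interact nontrivially on the right — consistent with the asymmetric, one-directional statement given in Lemma~\ref{Lem:Faithful property on Module product algebra}.
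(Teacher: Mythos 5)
Your proof is correct and follows essentially the same route as the paper's: both specialize the test element to $(b,0)$, reducing the second coordinate to $xb=0$ for all $b\in\mathcal{A}$, and then invoke faithfulness of $\mathcal{A}$ and $\mathcal{A}$-faithfulness of $\mathcal{X}$ coordinatewise. No gaps.
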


\begin{proof}
	Let $(a,x) \in \mathcal{A} \times_m \mathcal{X}$ such that $(a,x) \times_{m} (b,y) = (0,0) \ ((b,y) \in \mathcal{A} \times_m \mathcal{X})$. Then $(a,x)\times_{m} (b,0) = (ab, xb)=(0,0) \ (b \in \mathcal{A})$. Hence $a=0$ as $\mathcal{A}$ is faithful and $x=0$ as $\mathcal{X}$ is $\mathcal{A}$-faithful.
\end{proof}

\begin{thm}
	Let $(\mathcal{A}, \| \cdot \|)$ be a Banach algebra and $\mathcal X$ be a Banach $\mathcal{A}$-bimodule. Then $N(\mathcal{A} \times_m \mathcal{X})$ is singleton if and only if $N(\mathcal{A})$ and $N(\mathcal X)$ are singleton.
\end{thm}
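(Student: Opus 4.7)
The plan is to follow the template of Theorem~\ref{uniqueness norm on direct-sum product}, working with the closed chain of ideals $J_1 = \{(0,0)\} \subset J_2 = \{0\} \times \mathcal{X}$ inside the Banach algebra $\mathcal{A} \times_m \mathcal{X}$ for the sufficiency direction, and with explicit constructions of algebra norms for necessity.

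For sufficiency, the ideal $J_2$ is closed (Remark (i) after Definition~\ref{Def:Module}), and the induced product on $J_2$ is trivial because $(0,x) \times_m (0,y) = (0,\, 0 \cdot y + x \cdot 0) = (0,0)$. Hence $J_2/J_1 \cong J_2$ is isomorphic to $\mathcal{X}$ viewed as an algebra with trivial product, while $(\mathcal{A} \times_m \mathcal{X})/J_2 \cong \mathcal{A}$ by Remark (ii). Given that both $N(\mathcal{A})$ and $N(\mathcal{X})$ are singleton, Proposition~\ref{1} immediately yields that $N(\mathcal{A} \times_m \mathcal{X})$ is singleton.

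For necessity I argue by contraposition. Suppose $p_1, p_2 \in N(\mathcal{A})$ are non-equivalent and set
\[
q_i(a,x) := \max\{p_i(a),\|a\|\} + |x| \qquad ((a,x) \in \mathcal{A} \times_m \mathcal{X}).
\]
Submultiplicativity of $q_i$ under $\times_m$ follows from the inequality $\max\{\alpha\beta,\gamma\delta\} \leq \max\{\alpha,\gamma\}\max\{\beta,\delta\}$ combined with the bimodule bounds $|ay| \leq \|a\||y|$ and $|xb| \leq |x|\|b\|$; the $\max$ with $\|\cdot\|$ is precisely what preserves control of the cross-terms when $p_i$ is incomparable with $\|\cdot\|$. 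Non-equivalence of $q_1$ and $q_2$ is then detected by restricting to $\mathcal{A} \times \{0\}$, where $q_i(a,0) = \max\{p_i(a), \|a\|\}$, and feeding in a sequence that realizes the non-equivalence of $p_1, p_2$. A symmetric construction $q_i(a,x) := \|a\| + \max\{r_i(x), |x|\}$ handles non-equivalent $r_1, r_2 \in N(\mathcal{X})$, with the bimodule-compatibility of $|\cdot|$ playing the role that $\|\cdot\|$ played in the algebra case.

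The main obstacle is the non-equivalence step rather than submultiplicativity: the $\max$-with-$\|\cdot\|$ device can flatten the gap between $p_1$ and $p_2$ on sequences where $\|\cdot\|$ already dominates both. I would split into cases: if some $p_i$ is not dominated by $\|\cdot\|$, a witnessing sequence $(a_n, 0)$ already makes $q_i$ blow up while $\|\cdot\|_1$ stays bounded; otherwise a rescaling argument, dividing a witnessing sequence by $\|\cdot\|$, recovers an asymptotic ratio that distinguishes $q_1$ and $q_2$. This case split is the delicate point of the proof; the rest of the argument parallels Theorem~\ref{uniqueness norm on direct-sum product}.
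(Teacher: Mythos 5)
Your sufficiency argument coincides with the paper's: the chain $J_1=\{(0,0)\}\subset J_2=\{0\}\times\mathcal{X}$, the identifications $(\mathcal{A}\times_m\mathcal{X})/J_2\cong\mathcal{A}$ and $J_2/J_1\cong\mathcal{X}$ (with zero product), and Proposition~\ref{1}. That half is fine and is exactly what the paper does.

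The necessity half has a genuine gap at the non-equivalence step, and the case split you sketch does not repair it. If $p_1$ and $p_2$ are both dominated by $\|\cdot\|$ --- which is perfectly compatible with $p_1\not\sim p_2$: take $\mathcal{A}=\ell^1$ with pointwise product, $\|\cdot\|$ the $\ell^1$-norm, $p_1=\|\cdot\|$ and $p_2$ the $\ell^2$-norm (submultiplicative for pointwise product and dominated by the $\ell^1$-norm, yet inequivalent to it) --- then $\max\{p_1(a),\|a\|\}=\|a\|=\max\{p_2(a),\|a\|\}$ for every $a$, so $q_1$ and $q_2$ are \emph{identical}. No rescaling or asymptotic-ratio argument can separate two norms that coincide; the information carried by $p_2$ has been erased by the $\max$, not merely hidden. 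So in precisely the branch you label ``otherwise,'' your construction produces a single norm, and the contrapositive fails to get off the ground.

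For comparison, the paper's converse (imported from Theorem~\ref{uniqueness norm on direct-sum product}) takes $q_i(a,x)=p_i(a)+q(x)$, for which non-equivalence is immediate by restricting to $\mathcal{A}\times\{0\}$; the cost is that submultiplicativity of $q_i$ under $\times_m$ requires $q(ay)\le p_i(a)\,q(y)$ and $q(xb)\le q(x)\,p_i(b)$, i.e.\ compatibility of the chosen module norm with $p_i$ rather than with $\|\cdot\|$, a point the paper does not verify. You correctly identified this tension --- your $\max$ device is exactly what restores submultiplicativity --- but submultiplicativity and non-equivalence pull in opposite directions here, and your proposal secures only the first. Closing the gap would require renorming $\mathcal{X}$ together with $\mathcal{A}$ (a module norm manufactured from $p_i$ so that the additive norm $p_i(a)+|x|_i$ is both submultiplicative and still detects the gap between $p_1$ and $p_2$ on the first coordinate), or a different witness altogether; as written, the argument does not prove the necessity direction.
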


\begin{proof}
	Assume that $N(\mathcal{A})$ and $N(\mathcal X)$ are singleton. Set $J_1=\{(0,0)\}$ and $J_2= \{0\} \times \mathcal{X}$. Then $J_1$ and $J_2$ are closed ideals in $\mathcal{A} \times_{m} \mathcal{X}, \, J_1 \subset J_2, \, (\mathcal{A} \times_{m} \mathcal{X})/J_2 \cong \mathcal{A}$, and $J_2/J_1 \cong \mathcal{X}$. Since $N(\mathcal{A})$ and $N(\mathcal{X})$ are singleton, $N((\mathcal{A} \times_{m} \mathcal{X})/J_2)$ and $N(J_2/ J_1)$ are singleton. Hence, by Proposition~\ref{1}, $\mathcal{A} \times_{m} \mathcal X$ has unique algebra norm, i.e., $N(\mathcal{A} \times_{m} \mathcal X)$ is singleton. The proof of ``converse part" is similar to the proof of Theorem~\ref{uniqueness norm on direct-sum product}.
\end{proof}

\begin{defn}\label{Defn:Null}
	Let $(\mathcal{A}, \| \cdot \|)$ be a normed algebra, and set $\mathcal{A} \times_0 \mathbb{C}$, with product $(a, \alpha) \times_0 (b, \beta) = ab$ and $\|(a, \alpha)\| = \|a\| + | \alpha| \ ((a, \alpha) \in \mathcal{A} \times_0 \mathbb{C})$. Then $(\mathcal{A} \times_0 \mathbb{C}, \times_0, \| \cdot \|)$ is a normed algebra with rad($\mathcal{A} \times_0 \mathbb{C}$) = $\mathbb{C}$.
\end{defn} 

\noindent The next result is a generalization of  given example in~\cite[P. 633]{DL:97}.

 \begin{thm}\label{Norms on null product algebra}
 	Let $\mathcal{A}$ be a normed algebra with the co-dimension of $\mathcal{A}^2$ being infinite. Then $N(\mathcal{A} \times_0 \mathbb{C})$ is infinite. Moreover, if $N_c(\mathcal{A})$ is non-empty, then $N_c(\mathcal{A} \times_0 \mathbb{C})$ is infinite.
 \end{thm}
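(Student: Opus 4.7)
The plan is to exploit the degenerate product on $\mathcal{A} \times_0 \mathbb{C}$: since $(a,\alpha) \times_0 (b,\beta) = (ab,0)$ always lies in $\mathcal{A}^2 \times \{0\}$, the $\mathbb{C}$-coordinate is ``free'' under multiplication and may be perturbed by any linear functional of $a$ that annihilates $\mathcal{A}^2$. Concretely, for every linear functional $\varphi : \mathcal{A} \to \mathbb{C}$ with $\varphi|_{\mathcal{A}^2} = 0$, I would first show that
\[
T_\varphi(a,\alpha) := (a,\, \alpha + \varphi(a))
\]
is an algebra automorphism of $\mathcal{A} \times_0 \mathbb{C}$. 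It is a linear bijection with inverse $T_{-\varphi}$, and the homomorphism property $T_\varphi(x \times_0 y) = T_\varphi(x) \times_0 T_\varphi(y)$ is immediate from $\varphi(ab)=0$.

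Next I would define the pulled-back norm
\[
\|(a,\alpha)\|_\varphi := \|T_\varphi(a,\alpha)\| = \|a\| + |\alpha + \varphi(a)|.
\]
Submultiplicativity is inherited from $\|\cdot\|$ via the algebra-morphism property of $T_\varphi$, so $\|\cdot\|_\varphi \in N(\mathcal{A} \times_0 \mathbb{C})$. Moreover, $T_\varphi$ is by construction an isometric algebra isomorphism from $(\mathcal{A} \times_0 \mathbb{C}, \|\cdot\|_\varphi)$ onto $(\mathcal{A} \times_0 \mathbb{C}, \|\cdot\|)$, so completeness transfers; thus $\|\cdot\|_\varphi \in N_c(\mathcal{A} \times_0 \mathbb{C})$ whenever the starting norm on $\mathcal{A}$ is complete. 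This reduces the ``moreover'' clause to the main assertion, provided one starts with a Banach algebra norm.

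To tell the norms apart, I would establish the equivalence criterion: $\|\cdot\|_{\varphi_1}$ and $\|\cdot\|_{\varphi_2}$ are equivalent if and only if $\varphi_1 - \varphi_2$ is bounded with respect to $\|\cdot\|$. Plugging $(a,-\varphi_1(a))$ into an assumed inequality $\|\cdot\|_{\varphi_2} \leq C \|\cdot\|_{\varphi_1}$ forces $|(\varphi_2-\varphi_1)(a)| \leq (C-1)\|a\|$; the reverse direction is a direct triangle-inequality estimate.

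The main technical step is producing an unbounded linear functional $\varphi_0$ on $\mathcal{A}$ vanishing on $\mathcal{A}^2$. For this I would fix an algebraic complement $W$ of $\mathcal{A}^2$ in $\mathcal{A}$; by hypothesis $\dim W = \infty$. Pick a linearly independent sequence $(w_n)_{n \geq 1} \subset W$ (necessarily $\|w_n\| > 0$), extend it to a Hamel basis of $W$, and define $\varphi_0(w_n) := n\|w_n\|$, $\varphi_0 \equiv 0$ on the remaining basis vectors of $W$ and on all of $\mathcal{A}^2$, then extend by linearity. Since $|\varphi_0(w_n)|/\|w_n\| = n \to \infty$, $\varphi_0$ is unbounded. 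The family $\{t \varphi_0 : t \in [0,\infty)\}$ consists of linear functionals vanishing on $\mathcal{A}^2$, and for $s \neq t$ the difference $(s-t)\varphi_0$ is unbounded. By the criterion above, $\{\|\cdot\|_{t\varphi_0}\}_{t \geq 0}$ is an uncountable family of pairwise inequivalent algebra norms on $\mathcal{A} \times_0 \mathbb{C}$; all are complete when $\|\cdot\|$ is. This yields both conclusions. The only delicate point is ensuring that ``infinite codimension'' really delivers an unbounded functional, but this follows because any infinite-dimensional subspace of a normed space admits unbounded linear functionals, produced explicitly by the Hamel-basis construction above.
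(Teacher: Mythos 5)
Your proof is correct, and its engine is the same as the paper's: algebra norms on $\mathcal{A}\times_0\mathbb{C}$ of the form $\|a\|+|\alpha+\varphi(a)|$ with $\varphi$ a linear functional annihilating $\mathcal{A}^2$, which is exactly what makes submultiplicativity survive the null product. Where you diverge is in how you manufacture infinitely many inequivalence classes. The paper splits a linearly independent set disjoint from $\mathcal{A}^2$ into countably many infinite pieces $L_n$, builds one functional $\varphi_n$ per piece, and checks non-equivalence by evaluating the norms on specific basis vectors $a_{mk}$. You instead take a single unbounded functional $\varphi_0$ vanishing on $\mathcal{A}^2$ (from a Hamel basis of a complement of $\mathcal{A}^2$), consider the scaled family $t\varphi_0$, and prove a clean general criterion: $\|\cdot\|_{\varphi_1}\sim\|\cdot\|_{\varphi_2}$ iff $\varphi_1-\varphi_2$ is bounded, verified by the substitution $\alpha=-\varphi_1(a)$. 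This buys you several things: an uncountable family rather than a countable one; a transparent transfer of completeness via the isometric automorphism $T_\varphi$; and, notably, immunity to a weak point in the paper's own verification --- the paper asserts $\varphi_n(a_{mk})\in\{0,1\}$ for $m<n$, but since the Hamel basis $B_n$ is only required to contain $L_n$ (not all of $L$), the value of $\varphi_n$ at $a_{mk}\notin B_n$ is not actually controlled by the stated construction. Your equivalence criterion sidesteps any such pointwise bookkeeping, so your argument is both correct and somewhat more robust than the printed one.
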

 
 \begin{proof} Let $(\mathcal{A}, \| \cdot \|)$ be a normed algebra. Since $\mathcal{A}^2$ has infinite co-dimension in $\mathcal{A}$, there exist infinite linearly independent subset $L$ of $\mathcal{A}$ such that $\mathcal{A}^2 \cap L = \phi$ and $\| a \| =1 \; (a \in L)$. For each $n \in \mathbb{N}$, choose $L_n = \{ a_{n1}, a_{n2}, \ldots \} \subset L$ such that: 
 	\begin{enumerate}
 		\item Each $L_n$ is infinite;
 		\item $L_n \cap L_m = \phi \ (n \neq m)$;
 		\item  $L = \bigcup_{n=1}^{ \infty} L_n$.
 	\end{enumerate}
 	Let $B_n$ be a (Hamel) basis of $\mathcal{A}$ such that $L_n \subset B_n$ for each $n \in \mathbb{N}$. Then each $C_n = B_n \cap \mathcal{A}^2$ is a basis of $\mathcal{A}^2$. Consider the (unique) linear map $\varphi_n : \mathcal{A} \longrightarrow \mathbb{C}$ such that
 	\[
 	\varphi_n(a)=
 	\begin{cases}
 		k&  (\text{if } a=a_{nk}  \in L_n \setminus \{a_{n1}\});\\
 		1 & (\text{if } a \in B_n \setminus (L_n \cup C_n)); \\
 		0 & (\text{if } a \in C_n \cup \{a_{n1}\}).
 	\end{cases}
 	\]
 	Next take $\mathcal{B} = \mathcal{A} \times_0 \mathbb{C}$. For $(a, \alpha), (b, \beta) \in \mathcal{B}$, define
 	$$(a, \alpha) (b, \beta) = (ab, 0) \quad \text{ and } \quad p((a, \alpha))=\|a\|+ |\alpha|.$$
 	Then $(\mathcal{B}, p( \cdot ))$ is a normed algebra. For each $n \in \mathbb{N}$, define
 	$$p_n((a, \alpha)) = \|a\| + |\varphi_n(a)- \alpha| \quad ((a, \alpha) \in \mathcal{A}).$$
 	Clearly, each $p_n(\cdot)$ is a linear norm. Let $(a, \alpha), (b, \beta) \in \mathcal{B}$. Then $p_n((a, \alpha)(b, \beta)) = p_n((ab, 0)) = \|ab\| \leq p_n((a, \alpha))p_n((b, \beta))$ because $ab \in \mathcal{A}^2$ and hence $\varphi_n(ab) = 0$. Thus each $p_n(\cdot) \in N(\mathcal{B})$. Now, we \textit{claim} that these norms are non-equivalent. Let $m < n$ and $g_k = a_{mk} \ (k \in \mathbb{N})$. Then, for $k \geq 2, \; p_m((a_k, 0)) = 1+k$ and $p_n((a_k, 0)) \leq 2$ because $\varphi_n(a_{mk}) = 0 \text{ or 1}$. Thus we have proved our claim. Hence $N_c(\mathcal{A})$ is an infinite set. Suppose $(\mathcal{A}, \| \cdot \|)$ is a Banach algebra. Finally, we \textit{claim} that each $p_n(\cdot) \in N_c(\mathcal{B})$. Let $(a_n, \alpha_n)$ be a Cauchy sequence in $(\mathcal{B}, p_n( \cdot ))$. Then $a_n$ is a Cauchy sequence in $(\mathcal{A}, \| \cdot \|)$, converges to $a$ and $\alpha_n$ is a Cauchy sequence in $\mathbb{C}$, converges to $\alpha$. Hence $(a_n, \alpha_n)$ converges to $(a, \alpha) \in \mathcal{B}$.
 \end{proof}

\noindent The next result is a direct application of Proposition~\ref{1} and providing sufficient condition for the co-dimension of $\mathcal{A}^2$ is finite in $\mathcal{A}$.

\begin{cor}\label{3}
	Let $\mathcal{A}$ be an algebra such that $N(\mathcal{A})$ and $N_c(\mathcal{A})$ are singleton. Then 
\begin{enumerate}[{(i)}]
\item $N(\mathcal{A} \times_0 \mathbb{C})$ is singleton.
\item the co-dimension of $\mathcal{A}^2$ is finite in $\mathcal{A}$.
\item if $\mathcal{A}^2$ is dense in $\mathcal{A}$, then $\mathcal{A}^2 = \mathcal{A}$.
\end{enumerate}
\end{cor}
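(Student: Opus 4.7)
The plan is to prove the three assertions in sequence, using (i) as a lever for (ii) and (ii) in turn for (iii).

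For (i), I would invoke Proposition~\ref{1} applied to the Banach algebra $(\mathcal{A} \times_0 \mathbb{C}, \|\cdot\|_1)$ with $\|(a,\alpha)\|_1 = \|a\| + |\alpha|$ (which is Banach because $N_c(\mathcal{A}) \neq \emptyset$ supplies a complete norm on $\mathcal{A}$). Take $\mathcal{I}_1 = \{(0,0)\}$ and $\mathcal{I}_2 = \{0\} \times \mathbb{C}$: the quotient $(\mathcal{A} \times_0 \mathbb{C})/\mathcal{I}_2$ is algebra-isomorphic to $\mathcal{A}$, which has singleton $N$ by hypothesis, while $\mathcal{I}_2/\mathcal{I}_1 \cong \mathbb{C}$ is one-dimensional and so automatically has a unique algebra norm. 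Proposition~\ref{1} then yields $N(\mathcal{A} \times_0 \mathbb{C})$ singleton.

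For (ii), I would take the contrapositive of Theorem~\ref{Norms on null product algebra}. If the co-dimension of $\mathcal{A}^2$ in $\mathcal{A}$ were infinite, the theorem would force $N(\mathcal{A} \times_0 \mathbb{C})$ to be infinite, contradicting (i). Hence $\mathcal{A}^2$ has finite co-dimension in $\mathcal{A}$.

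For (iii), the idea is to show that if $\mathcal{A}^2$ is dense but $\mathcal{A}^2 \subsetneq \mathcal{A}$, then one can manufacture an algebra norm on $\mathcal{A}$ inequivalent to the unique Banach algebra norm, contradicting $N(\mathcal{A})$ singleton. Write the unique Banach algebra norm as $\|\cdot\|$. By (ii), $\mathcal{A} = \mathcal{A}^2 \oplus V$ algebraically with $\dim V < \infty$ and $V \neq \{0\}$. Pick a nonzero linear functional $f: \mathcal{A} \to \mathbb{C}$ that annihilates $\mathcal{A}^2$ (e.g., a coordinate of the algebraic projection onto $V$). Density of $\mathcal{A}^2$ forces $f$ to be $\|\cdot\|$-discontinuous: otherwise $\ker f$ would be a closed subspace containing the dense set $\mathcal{A}^2$, hence equal to $\mathcal{A}$, making $f = 0$. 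Now set $\|a\|' := \|a\| + |f(a)|$; since $f(ab) = 0$ for all $a, b \in \mathcal{A}$ (as $ab \in \mathcal{A}^2$), one checks $\|ab\|' = \|ab\| \leq \|a\|\|b\| \leq \|a\|' \|b\|'$, so $\|\cdot\|' \in N(\mathcal{A})$. Equivalence of $\|\cdot\|'$ to $\|\cdot\|$ would bound $f$, contradicting its discontinuity, so the two norms are inequivalent, contradicting $N(\mathcal{A})$ singleton.

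The main obstacle is (iii). The tempting shortcut -- arguing that a dense subspace of finite co-dimension in a Banach space is automatically closed, hence equal to the whole space -- fails, since finite-codimensional subspaces of Banach spaces need not be closed (any discontinuous linear functional provides a counterexample). It is exactly this failure that makes the hypothesis $N(\mathcal{A})$ singleton (not merely $N_c(\mathcal{A})$) essential: the witness norm $\|\cdot\|'$ constructed above is incomplete whenever $f$ is discontinuous, so it only contradicts uniqueness at the level of all algebra norms.
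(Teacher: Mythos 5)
Your proposal is correct and follows essentially the same route as the paper: part (i) via Proposition~\ref{1} with the ideals $\{(0,0)\}\subset\{0\}\times\mathbb{C}$, part (ii) by contraposition of Theorem~\ref{Norms on null product algebra}, and part (iii) by building a discontinuous linear functional vanishing on the dense subspace $\mathcal{A}^2$ and perturbing the norm to contradict uniqueness. Your write-up of (iii) is in fact slightly more explicit than the paper's about why the functional must be discontinuous and why the resulting norm is submultiplicative, but the underlying argument is identical.
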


\begin{proof}
$(i)$ Since $(\mathcal{A} \times_0 \mathbb{C})/\{0\} \times_0 \mathbb{C} \cong \mathcal{A}$ and $\{0\} \times_0 \mathbb{C}/ \{0\} \times_0 \{0\}$ have unique algebra norms. Hence by Proposition~\ref{1}, $(\mathcal{A} \times_0 \mathbb{C})/\{0\} \times_0 \{0\} \cong \mathcal{A} \times_0 \mathbb{C}$ has a unique algebra norm.\\
$(ii)$ Suppose if possible the co-dimension of $\mathcal{A}^2$ is infinite in $\mathcal{A}$. Then by above Theorem~\ref{Norms on null product algebra}, the Banach algebra $(\mathcal{A} \times_0 \mathbb{C}, p(\cdot))$ has infinitely many norms, but by Statement (i), the Banach algebra $(\mathcal{A} \times_0 \mathbb{C}, p(\cdot))$ has a unique norm. Hence the co-dimension of $\mathcal{A}^2$ is finite in $\mathcal{A}$.\\
$(iii)$ Suppose the co-dimension of $\mathcal{A}^2$ is $n$ in $\mathcal{A}$ for some $n \in \mathbb{N}$. Then the set $B= \{v_1 + \mathcal{A}^2, v_2 + \mathcal{A}^2 \ldots, v_n+ \mathcal{A}^2 \}$ is a basis of $\mathcal{A}/\mathcal{A}^2$. Now consider $W = \text{span}\{\mathcal{A}^2, v_1, v_2, \ldots, v_{n-1} \}$ is a hyperspace of $\mathcal{A}$, i.e. dim($\mathcal{A}/W)=1$. Then there exist discontinuous linear functional $\varphi : \mathcal{A} \longrightarrow \mathbb{C}$ such that ker$\varphi= W$. Define $\|| a \|| = \|a \| + |\varphi(a)| \; \; (a \in \mathcal{A})$ and $\| a \| \leq \|| a \|| \  (a \in \mathcal{A})$. Hence $n$ must be 0 and that gives $\mathcal{A}^2 = \mathcal{A}$.
\end{proof}

\section{Examples}

\begin{exam}
	Let $\mathcal{A} = M_2(\mathbb C)$ and ideal $\mathcal{I} = \left\langle \begin{bmatrix}
		a & 0 \\
		0 & 0 
	\end{bmatrix}, \begin{bmatrix}
		0 & b \\
		0 & 0 
	\end{bmatrix} \right\rangle$. Then $\mathcal{I}$ is a right non-faithful ideal of a faithful algebra $M_2(\mathbb{C})$.
\end{exam}

\begin{exam}
	Let $1 \leq p < \infty$ and let $\mathcal{A} = \ell^p$ with pointwise product. Then $N(\ell^p)>1$ and $(\ell^p)^2 = \ell^{\frac{p}{2}}$ is dense in $(\ell^p, \| \cdot \|_p)$ but $\ell^{\frac{p}{2}} \subsetneq \ell^p$. This example says that we can not relaxe the condition ``$N(\mathcal{A})$ is singleton'' from Corollary~\ref{3}$(iii)$.
\end{exam}

\begin{exam}
	Every ideal of a commutative semisimple algebra is faithful.
\end{exam}

\section{Question}

\noindent It would be interesting to examine whether  Proposition~\ref{1} hold without assuming completeness.

\section*{Declarations}

\noindent\textbf{Funding } The second author is very thankful to the University Grants Commission (UGC), New Delhi, for providing Senior Research Fellowship.\\

\noindent\textbf{Conflict of interest } The authors have no relevant financial or non-financial interests to disclose.

\end{document}